\newtheorem{thm}{Theorem}[section]
\newtheorem{lemma}[thm]{Lemma}
\theoremstyle{remark}
\newtheorem{example}[thm]{Example}
\newtheorem{remark}[thm]{Remark}
\newtheorem{defin}{Definition}
\def\C{\mathbb{C}}
\def\Z{\mathbb{Z}}
\def\P{\mathbb{P}}
\def\R{\mathbb{R}}
\def\PT{\Pi}
\def\a{\alpha}
\def\l{\lambda}
\def\L{\Lambda}
\def\w0{\overline{w_0}}
\def\id{\rm id}
\def\emptyset{\varnothing}
\title[N.--O. polytopes of B.--S. varieties as Minkowski sums]
{Newton--Okounkov polytopes of Bott--Samelson varieties as Minkowski sums}
\author{Valentina Kiritchenko}
\email{vkiritch@hse.ru}
\thanks{The study has been funded by the Russian Academic Excellence Project '5-100'.}
\address{Laboratory of Algebraic Geometry and Faculty of Mathematics\\
National Research University Higher School of Economics, Russian Federation\\
Usacheva str. 6, 119048 Moscow, Russia}
\address{Institute for Information Transmission Problems, Moscow, Russia}
\date{}
\keywords{Newton--Okounkov body, Bott--Samelson variety, Minkowski sum}
\begin{document}
\begin{abstract}
We compute the Newton--Okounkov bodies of line bundles on a Bott--Samelson resolution
of the complete flag variety of $GL_n$ for a geometric valuation coming from a flag of
translated Schubert subvarieties.
The Bott--Samelson resolution corresponds to the decomposition
$(s_1)(s_2s_1)(s_3s_2s_1)(\ldots)(s_{n-1}\ldots s_1)$
of the longest element in the Weyl group, and the Schubert subvarieties correspond to the
terminal subwords in this decomposition.
We prove that the resulting Newton--Okounkov polytopes for semiample line bundles satisfy
the additivity property with respect to the Minkowski sum.
In particular, they are Minkowski sums of Newton--Okounkov
polytopes of line bundles on the complete flag varieties for $GL_2$,\ldots, $GL_{n}$.
\end{abstract}

\maketitle

\section{Introduction}
Newton--Okounkov convex bodies provide a tool for extending toric geometry to non-toric varieties.
For instance, Newton--Okounkov polytopes of flag varieties were used to build a positive convex
geometric model for Schubert calculus where intersection of faces corresponds
to the intersection product  of cycles \cite{K10,KST,Ki16I}.
Another motivation to study such polytopes comes from representation theory.
String polytopes of Berenstein--Zelevinsky and Littelmann (in particular,
Gelfand--Zetlin polytopes), Feigin--Fourier--Littelmann--Vinberg polytopes and Nakashima--Zelevinsky
polyhedral realizations were exhibited as Newton--Okounkov polytopes of flag varieties for certain
geometric valuations \cite{FFL14,FFL15,Ka,K15,FO}.

An essential feature of Newton polytopes of toric varieties is the additivity property with respect
to the Minkowski sum.
Let $\Delta_v(X,L)$ denote the Newton--Okounkov convex body of a line bundle $L$ on a variety $X$
with respect to a valuation $v$ on the field $\C(X)$ of rational functions.
By the {\em additivity property} we mean that
$$\Delta_v(X,L_1\otimes L_2)=\Delta_v(X,L_1)+\Delta_v(X,L_2)$$
for any two semiample line bundles $L_1$ and $L_2$ on $X$ (cf. \cite{KaKh}).
If $X$ is toric, then there is a natural valuation $v_0$ on $\C(X)$ that assigns to every Laurent
polynomial its lowest degree term.
The resulting polytopes $\Delta_{v_0}(X,L)$ are called {\em Newton polytopes} and satisfy the
additivity property.
In particular, this property is used in the famous Bernstein--Koushnirenko theorem
to identify intersection indices of divisors in toric varieties with mixed volumes of Newton
polytopes.
In general, the additivity property does not necessarily hold for Newton--Okounkov convex bodies
(see Section \ref{ss.NO} for more details).
A natural problem for non-toric varieties $X$ is to find a valuation $v_0$ on $\C(X)$ such that
the resulting Newton--Okounkov bodies $\Delta_{v_0}(X,L)$ still satisfy the additivity property.

In this preprint, we establish additivity property for a geometric valuation on Bott--Samelson
varieties considered in \cite{An,K15}.
Recall that Bott--Samelson resolutions $R_I$ of Schubert varieties depend on the choice of a
sequence $I$ of simple roots
and are constructed as towers of successive $\P^1$-fibrations
(see Section \ref{ss.BS} for a reminder).
If $I$ encodes a reduced decomposition $w_I$ of the longest word in the Weyl group
then $R_I$ is birationally isomorphic to the complete flag variety $G/B$, that is,
$\C(R_I)\simeq\C(G/B)$.
Let $d$ denote the dimension of $G/B$.
In \cite{K15}, we dealt with a geometric valuation $v_0$ on $\C(G/B)$ coming from the flag of
translated Schubert subvarieties
$$w_0X_{\id}\subset w_0w_{d-1}^{-1}X_{w_{d-1}}\subset w_0w_{d-2}^{-1}X_{w_{d-2}}
\subset\ldots\subset w_0w_{1}^{-1}X_{w_{1}}\subset X,$$
where $w_1$, $w_2$,\ldots, $w_{d-1}$ are terminal subwords of the decomposition $w_I$ (see Section
\ref{ss.NO} for a precise definition).
We now consider the same valuation on the Bott--Samelson variety $R_I$.

Let $G=GL_n$, and $I=(\a_1;\a_2,\a_1;\a_3,\a_2,\a_1;\ldots;\a_{n-1},\ldots,\a_{1})$ where $\a_1$, \ldots, $\a_{n-1}$ are simple roots.
Then
$$w_I=(s_1)(s_2s_1)(s_3s_2s_1)(\ldots)(s_{n-1}\ldots s_1).$$
In \cite{K15}, valuation $v_0$ was used to compute Newton--Okounkov polytopes
$\Delta_{v_0}(R_I,L)$ for all semiample line bundles $L$ coming from the complete flag variety
$GL_n/B_n$.
In the present preprint, we extend this computation to all semiample line bundles on $R_I$.
By construction, $R_I$ admits projections to $n-1$ flag varieties $GL_2/B_2$,\ldots,
$GL_{n}/B_{n}$, and pullbacks of semiample line bundles from these flag varieties span the
semigroup of semiample divisors on $R_I$ \cite[Corollary 3.3]{LT}.
\begin{thm} \label{t.main}
Let $L$ be a semiample line bundle on $R_I$, and $L=L_1\otimes\cdots\otimes L_{n-1}$
its decomposition into line bundles coming from the flag varieties $GL_2/B_2$,\ldots, $GL_{n}/B_n$.
Then
$$\Delta_{v_0}(R_I,L)=\Delta_{v_0}(R_I,L_1)+\ldots+\Delta_{v_0}(R_I,L_{n-1}),$$
that is,  Newton--Okounkov polytopes satisfy additivity property.
\end{thm}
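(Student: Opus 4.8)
The plan is to reduce additivity to an explicit combinatorial description of the Newton--Okounkov polytope $\Delta_{v_0}(R_I, L)$ as a fiber-type polytope built by successive "stacking" over the $\P^1$-fibration tower. Recall that $R_I$ is a tower of $\P^1$-bundles $R_I = R_I^{(d)} \to R_I^{(d-1)} \to \cdots \to R_I^{(1)} \to \mathrm{pt}$, where $d = \dim G/B = \binom{n}{2}$, and the flag of translated Schubert subvarieties defining $v_0$ is compatible with this tower: the valuation $v_0$ reads off, for a rational function $f$, an integer vector $(a_1,\dots,a_d)$ whose first coordinate is the order of vanishing/pole of $f$ along the last divisor $w_0 w_{d-1}^{-1} X_{w_{d-1}}$, then one restricts and continues. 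Because $v_0$ is a \emph{geometric} valuation adapted to this tower, the Newton--Okounkov body of a semiample $L$ has the standard description as the set of lattice points (and their convex hull) obtained by integrating Okounkov fibers: there is a function assigning to each already-chosen prefix $(a_1,\dots,a_k)$ an interval $[\ell_k(a_1,\dots,a_k), r_k(a_1,\dots,a_k)]$ of allowed values for $a_{k+1}$, coming from the space of sections of the restricted line bundle on the fiber $\P^1$. The first step, then, is to write down these interval bounds $\ell_k, r_k$ explicitly for our specific $I$ and valuation, using the computation already carried out in \cite{K15} for line bundles pulled back from $GL_n/B_n$ together with the projection description of $\mathrm{Pic}(R_I)$ from \cite[Corollary 3.3]{LT}.

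The second step is to observe that the tower for $R_I$ factors compatibly through the towers for the Bott--Samelson varieties $R_{I_2}, \dots, R_{I_n}$ associated to the flag varieties $GL_2/B_2, \dots, GL_n/B_n$ — indeed $I$ is literally the concatenation of the blocks $I_2 = (\a_1)$, $I_3 = (\a_2,\a_1)$, \ldots, $I_n = (\a_{n-1},\dots,\a_1)$, and the projection $R_I \to GL_k/B_k$ factors through $R_{I_k}$. The coordinates of $v_0$ thus split into consecutive blocks, the $k$-th block of size $k-1$ recording the valuation data relevant to the $GL_k$-factor. The key point to prove is that the interval bounds are \emph{affine-linear in the line-bundle parameter} and, moreover, that the bound $\ell_k$ (respectively $r_k$) for a coordinate in the $j$-th block depends only on the previously chosen coordinates \emph{within the same block} and on the parameters of $L_j$. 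If this "block-diagonal" structure of the valuation holds, then the Okounkov body of $R_I$ is literally the product of the Okounkov bodies of the $R_{I_k}$'s sitting in complementary coordinate subspaces, and since $\Delta_{v_0}(R_I, L_j)$ (the summand on the right-hand side) is supported precisely on the $j$-th block of coordinates with all other coordinates zero, the product decomposes as the claimed Minkowski sum. Additivity in each block — i.e. $\Delta_{v_0}(R_{I_k}, M \otimes M') = \Delta_{v_0}(R_{I_k}, M) + \Delta_{v_0}(R_{I_k}, M')$ — then follows from the affine-linearity of the interval endpoints in the line-bundle parameters, which makes the fiber-polytope construction additive.

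More carefully, the argument I would give runs: (1) normalize so that each $\Delta_{v_0}(R_I, L_j)$ lies in the coordinate subspace spanned by the $j$-th block; (2) show $\Delta_{v_0}(R_I, L) \supseteq \sum_j \Delta_{v_0}(R_I, L_j)$ by exhibiting, for lattice points $p_j$ in the respective summands, a section of $L$ whose valuation is $\sum_j p_j$ — here one multiplies together the pullbacks of the sections realizing the $p_j$, and one checks the valuation of a product is the sum of valuations \emph{when the supports lie in disjoint coordinate blocks}, using that $v_0$ is a valuation (so $v_0(fg) = v_0(f) + v_0(g)$ always) together with a dimension count showing equality of the two bodies once the inclusion and the opposite-direction dimension/volume bound are in hand; (3) for the reverse inclusion, bound the length of each Okounkov interval for $L$ by the sum of the corresponding interval lengths for the $L_j$, which reduces to the explicit formulas from step one. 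The main obstacle I anticipate is step (2)/(3)'s precise bookkeeping: one must verify that the interval bounds $\ell_k, r_k$ for $R_I$ genuinely decouple across the blocks — a priori the later $\P^1$-fibrations could "see" the earlier coordinates and entangle the blocks. Establishing this decoupling is exactly where the specific combinatorics of the decomposition $w_I = (s_1)(s_2 s_1)\cdots(s_{n-1}\cdots s_1)$ and of the terminal-subword Schubert subvarieties must be exploited; I expect it to follow from the observation that restricting the defining flag of translated Schubert varieties to a fiber over $R_{I_k}$ recovers, up to translation, the analogous flag for $R_{I_k}$ itself, so that the Okounkov recursion is "self-similar" block by block.
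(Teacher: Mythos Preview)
Your argument rests on the claim that the Newton--Okounkov bodies $\Delta_{v_0}(R_I,L_j)$ are supported on \emph{complementary} coordinate blocks, so that their Minkowski sum is in fact a Cartesian product and the Okounkov recursion decouples block by block. This is false. In the paper's indexing, $L_1=p_1^*L_{\L_1}$ is pulled back from the full flag variety $GL_n/B_n$, and by \cite[Theorem~2.1]{K15} its Newton--Okounkov body is the FFLV polytope $FFLV(\L_1)$, which is \emph{full-dimensional} in $\R^d$ for strictly dominant $\L_1$. More generally, the polytopes $FFLV(\L_i)$ live in the nested flag of subspaces $\R^1\subset\R^3\subset\ldots\subset\R^d$ described just before Section~\ref{s.main}, not in complementary subspaces. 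Example~\ref{e.An} makes this concrete: for $n=3$ the summand $FFLV(\L_1)$ is a $3$-dimensional polytope and $FFLV(\L_2)$ is a segment in the $u^1_2$-direction, a coordinate that already appears nontrivially in $FFLV(\L_1)$; their Minkowski sum has $7$ facets, not the $8$ one would get from a product. Consequently the ``block-diagonal decoupling'' you hope to verify cannot hold, and with it both step~(2) and the interval-length bound of step~(3) collapse: the Okounkov interval for a coordinate in the $j$-th block genuinely depends on the parameters of \emph{all} $L_i$ with $i\le j$, not just $L_j$.

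The paper's proof takes an entirely different route. It uses superadditivity (the part you do get, via $v_0(fg)=v_0(f)+v_0(g)$) for the inclusion $\sum_i FFLV(\L_i)\subset\Delta_{v_0}(R_I,L)$, and then shows equality by a volume argument: the normalized volume of the Minkowski sum is computed by expressing $\sum_i GZ(\L_i)$ via convex-geometric divided difference operators $D_i$ (which are additive under Minkowski sum), invoking the generalized Demazure character formula \cite{LLM} to match this with the degree of $R_I$ under $L$, and finally comparing the Ehrhart polynomials of the GZ and FFLV Minkowski sums. No direct analysis of the Okounkov fiber intervals is attempted.
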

Together with \cite[Theorem 2.1]{K15} this theorem yields a description of
Newton--Okounkov polytopes for all semiample line bundles on $R_I$ as Minkowski sums of
Feigin--Fourier--Littelmann--Vinberg polytopes.
For $n=3$, this agrees with computation in \cite[Section 6.4]{An} (see Example \ref{e.An}).
The proof of Theorem \ref{t.main} uses divided difference operators on polytopes defined in
\cite{Ki16II}, in particular, their additivity with respect to the Minkowski sum.

Note that Bott--Samelson varieties are topologically the same as smooth toric varieties
whose Newton polytopes are multidimensional trapezoids called {\em Grossberg--Karshon cubes}
\cite{GK}.
There are valuations on $R_I$ that produce Grossberg--Karshon cubes as Newton--Okounkov polytopes
of $R_I$ for certain class of very ample line bundles on $R_I$ \cite{HY,Fu}.
However, these polytopes do not satisfy additivity property already for
$R_{121}$ in the $GL_3$ case (cf. \cite[Examples 4.1-4.3]{HY} and \cite[Example B1]{Fu}).


\section{Reminder on Bott--Samelson varieties, Newton--Okounkov convex bodies and
Feigin--Fourier--Littelmann--Vinberg polytopes}

In this section, we recall the definition of the Bott--Samelson variety $R_I$, and describe its
Picard group using \cite[Theorem 3.1]{LT}.
We also recall the definition of Newton--Okounkov convex bodies and their
superadditivity property \cite[Proposition 2.32]{KaKh}.
Finally, we recall an elementary definition of Feigin--Fourier--Littelmann--Vinberg (FFLV)
polytopes in type $A$ following \cite{FFL}.

\subsection{Bott--Samelson varieties}
\label{ss.BS} Let $G$ be a connected complex reductive group with simple roots $\a_1$, \ldots, $\a_r$,
and $B\subset G$ a Borel subgroup.
Denote by $P_i$ the minimal parabolic subgroup corresponding to the root $\a_i$.
Let $\pi_i:G/B\to G/P_i$ be the projection of
the complete flag variety to the partial flag variety.
In what follows, we mostly deal with the case $G=GL_n(\C)$.
In this case, $r=n-1$ and
$G/B=\{(V^1\subset V^2\subset\ldots\subset V^{n-1}\subset \C^n)\}$ is the variety of complete
flags in $\C^n$.
The projection $\pi_i$ to the variety of partial flags
$G/P_i=\{(V^1\subset\ldots \subset V^{i-1}\subset V^{i+1}\subset\ldots\subset V^{n-1}\subset \C^n)\}$
forgets the $i$-th subspace $V^i$.

Let $I=(i_1,\ldots,i_\ell)$ denote a sequence of numbers such that $i_j\in\{1,\ldots,r\}$, that is,
$I$ defines a sequence of simple roots $(\a_{i_1},\ldots, \a_{i_\ell})$.
A {\em Bott--Samelson variety} $R_I$ together with a map
$r_I:R_I\to G/B$ is defined inductively as a tower of
successive $\P^1$-fibrations:
$$R_{\emptyset}\leftarrow R_{(i_1)}\leftarrow R_{(i_1,i_2)}\leftarrow\ldots\leftarrow R_{I^\ell}\leftarrow R_I,$$
where $I^{\ell}$ denotes $(i_1,\ldots,i_{\ell-1})$.
Put $R_{\emptyset}=\{pt\}$.
Assume that $r_{I^{\ell}}:R_{I^{\ell}}\to X$ is already defined and define
$R_I$ as the fiber product $R_{I^\ell}\times_{G/P_{i_\ell}} G/B$, that is,
$$R_I=\{(x,y)\in R_{I^\ell}\times G/B\ |\ r_{I^{\ell}}(x)=\pi_{i_\ell}(y)\}.$$
The map $r_I:R_I\to X$ is defined as the projection to the second factor.
By construction, $\dim X_I=\ell$.

Alternatively, $R_I=\P(r_{I^\ell}^*\pi_{i_\ell}^*E_{i_\ell})$ where $E_i$ is a rank two
vector bundle on $G/P_i$ such that $\P(E_i)=G/B$.
For instance, one can take $E_i={\pi_i}_*L_{\rho}$ as a uniform choice for all $i$.
Here $\rho$ denotes the sum of all dominant weights of $G$, and $L_\rho$ the line bundle on $G/B$
corresponding to $\rho$.

It is easy to check that $r_I:R_I\to G/B$ is a resolution of singularities for the Schubert variety
$X_{w_I}$ whenever $w_I:=s_{i_1}\ldots s_{i_\ell}$ is reduced.
Here $s_i$ denotes the simple reflection corresponding to the root $\a_i$.
For $GL_n$, we identify simple reflections with elementary transpositions, that is, $s_i:=(i~~i+1)$.
By construction, the projection $r_I:R_I\to X_{w_I}$ gives an isomorphism between the open Schubert cell
$U_{w_I}$ in $X_{w_I}$ and its preimage $r_I^{-1}(U_{w_I})$ in $R_I$.
In general, $\pi(R_I)$ is the Schubert variety corresponding to the
{\em Demazure product} of $s_{i_1}$,\ldots, $s_{i_\ell}$ (see \cite{An15}
for more details).
In particular, if $w_I$ is not reduced then $\dim r_I(R_I)<\ell$.

\begin{example} \label{e.R_121} Let $n=3$ and $I=(1,2,1)$.
Identify subspaces in $\C^3$ with their projectivizations in $\C\P^2$.
Then $R_\emptyset\hookrightarrow G/B$ can be thought of as a fixed flag $(a_0\in\l_0\subset\P^2)$,
where $a_0$ is a point and $l_0$ is a line.
Hence, $R_1=\{a_1\in\P^2|a_1\in l_0\}$, and $R_{12}=\{(a_1\in l)\subset\P^2|a_1\in l_0, a_1\in l\}$.
Finally, the Bott--Samelson variety $R_I$ consists of all triples $(a_1,l,a_2)$, where $a_1$, $a_2$
are points and $l$ is a line such that $a_1,a_2\in l$ and $a_1\in l_0$.
The projection $R_I\to G/B$ forgets $a_1$.
\end{example}
In the case $G=GL_n$, fix the decomposition
$$\w0=(s_1)(s_2s_1)(s_3s_2s_1)\ldots(s_{n-1}\ldots s_1)$$
of the longest
element $w_0\in S_n$.
In what follows, $X_{\w0}$ denotes the Bott--Samelson variety $X_{I(n)}$ for
$I(n):=(1;2,1;3,2,1;\ldots;n-1,\ldots,1)$.
Since $\w0$ is reduced, the map $r_{I(n)}:X_{\w0}\to G/B$ is a birational isomorphism.
Similarly to Example \ref{e.R_121}, points of $X_{\w0}$ can be identified with
collections of subspaces in $\C^n$ that satisfy certain incidence relations
(see \cite[Section 2.2]{K15}).

By construction, the Bott--Samelson variety $X_{\w0}$ admits $(n-1)$ projections
$p_1$,\ldots, $p_{n-1}$ to flag varieties.
Indeed, by definition $I(n)$ contains $I(n-i+1)$ as an initial subword, hence, there is a projection
$R_{I(n)}\to R_{I(n-i+1)}$.
Define $p_i:X_{\w0}\to GL_{n-i+1}/B_{n-i+1}$ as the composition of this projection with
$r_{I(n-i+1)}$.
Recall that the Picard group of the flag variety $GL_{n}/B_{n}$  is spanned by the line bundles
$L_{\L}$ corresponding to the dominant weights $\L$ of $GL_n$ (see \cite[Remark 1.4.2]{B}
for more details).
Using \cite[Corollary 3.3]{LT} and induction it is easy to prove the following lemma.

\begin{lemma}\label{l.Picard}
The semigroup of semiample divisors on $X_{\w0}$ is spanned by $p_1^*L_{\L_1}$,\ldots,
$p^*_{n-1}L_{\L_{n-1}}$ where $\L_i$ runs through dominant weights of $GL_{n-i+1}$.
\end{lemma}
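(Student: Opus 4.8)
\emph{Sketch.} The plan is to argue by induction on $n$. The base case $n=2$ is immediate: then $X_{\w0}=R_{(1)}\simeq GL_2/B_2\simeq\P^1$, the map $p_1$ is an isomorphism, and a divisor on $\P^1$ is semiample precisely when it is effective, that is, of the form $L_{\L_1}$ with $\L_1$ a dominant weight of $GL_2$.

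For the inductive step I would exploit the block structure of the word: $I(n)=I(n-1)\cdot(n-1,n-2,\ldots,1)$, so $I(n-1)$ is an initial subword of $I(n)$ and there is a morphism $f\colon X_{\w0}=R_{I(n)}\to R_{I(n-1)}$, namely the composition of the last $n-1$ of the $\P^1$-fibrations in the tower defining $R_{I(n)}$. Writing $p'_1,\ldots,p'_{n-2}$ for the analogous projections of $R_{I(n-1)}$, with $p'_j\colon R_{I(n-1)}\to GL_{n-j}/B_{n-j}$, one checks directly from the definitions that $p_i=p'_{i-1}\circ f$ for $i=2,\ldots,n-1$. Applying the inductive hypothesis to $R_{I(n-1)}$ and pulling back along $f$ (which is injective on Picard groups), we get that the image under $f^{*}$ of the semiample semigroup of $R_{I(n-1)}$ is the subsemigroup of $\Pic(R_{I(n)})$ generated by $p_2^{*}L_{\L_2},\ldots,p_{n-1}^{*}L_{\L_{n-1}}$, with each $\L_i$ ranging over the dominant weights of $GL_{n-i+1}$.

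It then remains to account for the $n-1$ new $\P^1$-fibrations between $R_{I(n-1)}$ and $R_{I(n)}$, and here I would invoke \cite[Corollary 3.3]{LT}. For a single such fibration $g\colon R=R'\times_{G/P_i}G/B\to R'$ with structure map $r\colon R\to G/B$, it gives that the semiample semigroup of $R$ is generated by $g^{*}$ of that of $R'$ together with one further class, which can be taken to be $r^{*}L_{\om_i}$: indeed $L_{\om_j}$ descends to $G/P_i$ for every $j\neq i$, so $r^{*}L_{\om_j}$ already lies in $g^{*}\Pic(R')$ and only the $i$-th direction is new. Iterating this along the block $(n-1,n-2,\ldots,1)$ — whose entries are pairwise distinct, so that the descent of $r^{*}L_{\om_i}$ propagates through all subsequent fibrations — and using the compatibility of $r$ with $\pi_i$ along each fibration, one finds that the $n-1$ new classes can be taken to be $r_{I(n)}^{*}L_{\om_{n-1}},\ldots,r_{I(n)}^{*}L_{\om_1}$, i.e.\ $p_1^{*}L_{\om_1},\ldots,p_1^{*}L_{\om_{n-1}}$. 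Since $GL_n/B_n\simeq SL_n/B$, the classes $p_1^{*}L_{\L_1}$ with $\L_1$ dominant for $GL_n$ are exactly the semigroup generated by $p_1^{*}L_{\om_1},\ldots,p_1^{*}L_{\om_{n-1}}$; combined with the previous paragraph this yields the lemma.

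I expect the main obstacle to be the bookkeeping in the third paragraph: extracting from \cite[Corollary 3.3]{LT} that, after iterating over the last block and pulling back to $R_{I(n)}$, the $n-1$ new semiample generators agree — up to a lower-triangular change of generators whose off-diagonal part is supported on $f^{*}\Pic(R_{I(n-1)})$ and the earlier new classes — with the pullbacks $p_1^{*}L_{\om_j}$ rather than with pullbacks along the intermediate $r$-maps of the tower, and then checking that the two candidate generating sets really generate the same \emph{semigroup} and not merely finite-index subsemigroups. As a consistency check, $\dim R_{I(n)}=\binom n2=\binom{n-1}2+(n-1)$ and $\sum_{i=1}^{n-1}(n-i)=\binom n2$, so the proposed generating set has the right cardinality and spans a full-rank sublattice of $\Pic(R_{I(n)})$; the sharp description of the semiample cone of a Bott--Samelson $\P^1$-bundle in \cite{LT} is what upgrades this to the required equality of semigroups.
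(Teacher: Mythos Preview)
Your proposal is correct and follows exactly the approach indicated in the paper, namely induction on $n$ combined with \cite[Corollary~3.3]{LT}; the paper gives no further details beyond that one-line hint, and your sketch is a faithful fleshing-out of it. The bookkeeping worry you flag in the last paragraph is in fact already handled by your ``descent propagates'' remark: since the entries of the final block $(n-1,\ldots,1)$ are pairwise distinct, each intermediate $r^*L_{\om_j}$ pulls back on the nose to $r_{I(n)}^*L_{\om_j}=p_1^*L_{\om_j}$, so no lower-triangular correction is needed.
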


\subsection{Newton--Okounkov convex bodies}\label{ss.NO}
Let $X$ be a projective variety of dimension $d$, and $L$ a very ample line bundle on $X$.
By fixing a global section $s_0$ of $L$ we can identify the space of global sections $H^0(X,L)$
with a subspace of the field of rational functions $\C(X)$.
Let $v:\C(X)\setminus\{0\}\to\Z^d$ be a surjective valuation.
For instance, one can choose local coordinates $x_1$,\ldots, $x_d$ on $X$ and assign to every
polynomial its lowest order term with respect to some ordering on $\Z^d$.
More geometrically, take a full flag of subvarieties
$\{x_0\}=X_0\subset X_1\subset\ldots\subset X_d=X$ at a smooth point $x_0\in X$ and assign to every
rational function its (properly defined) orders of vanishing along $X_i$ considered as
a hypersurface in $X_{i+1}$ (see \cite[Examples 2.12, 2.13]{KaKh} for more details).

\begin{example}\label{e.BS} Our main example will be the Bott--Samelson variety $X=R_I$ under
the assumption that $w_I:=s_{i_1}\ldots s_{i_\ell}$ is reduced.
By construction $R_I$ contains an affine space $U\simeq \C^\ell$, namely, the preimage of
the open Schubert cell $U_{w_I}\subset G/B$ under the projection $R_I\to X_{w_I}$.
Denote by $w_k:=s_{i_k}\ldots s_{i_\ell}$ the $k$-th terminal subword of $w_I$.
Consider the flag
$$w_I U_{\id}\subset w_Iw_{\ell-1}^{-1}U_{w_{\ell-1}}\subset w_Iw_{\ell-2}^{-1}U_{w_{\ell-2}}
\subset\ldots\subset w_Iw_{1}^{-1}U_{w_{1}}\subset U_{w_I},$$
of translated Schubert cells.
It defines the lowest term valuation on the open Schubert cell $U_{w_I}$, and hence on its
preimage $U$ in $R_I$.
Denote this valuation by $v_I$.
\end{example}

In what follows, we focus on the case $G=GL_n$ and $I=I(n)$
(that is, $R_I=X_{\w0}$).
\begin{defin} \label{d.val} For $G=GL_n$ and $\w0=(s_1)(s_2s_1)(s_3s_2s_1)\ldots(s_{n-1}\ldots s_1)$,
define $v_0$ as the valuation $v_{I(n)}$ on the Bott--Samelson variety $X_{\w0}=R_{I(n)}$.
That is, $v_0$ is the lowest term valuation corresponding to the
flag of translated Schubert cells (defined in Example
\ref{e.BS}) for terminal subwords of $\w0$.
\end{defin}
A more explicit definition of $v_0$
using natural geometric coordinates on $X_{\w0}$ can be found in \cite[Section 2.2]{K15}.

The {\em Newton--Okounkov convex body} $\Delta_v(X,L)$ is defined as the closure of the
convex hull of the set
$\bigcup_{k=1}^\infty\{\frac{v(s/s_0^k)}{k} \ |\ s\in H^0(X,L^{\otimes k}) \}\subset\Z^d\subset\R^d$.
Explicit description of $\Delta_v(X,L)$ (e.g., by inequalities) is usually a challenging task.
Sometimes, it is enough to compute
$$\Delta^1_v(X,L)={\rm conv}\{v(s/s_0)\ | \ s\in H^0(X,L) \},$$
that is, the first polytope approximation of $\Delta_v(X,L)$.
By \cite[Corollary 3.2]{KaKh}, we have that $\Delta^1_v(X,L)=\Delta_v(X,L)$ whenever the volume of
$\Delta^1_v(X,L)$ times $d!$ coincides with the degree of $X$ embedded into $\P(H^0(X,L)^*)$
(this argument will be used in the proof of Theorem \ref{t.main}).
Note that if $X$ is a toric variety and $v$ is any valuation defined using standard
coordinates on the open torus orbit $(\C^*)^d\subset X$, then $\Delta_v(X,L)$ coincides
with the classical Newton (or moment) polytope of $X$.

Newton-Okounkov polytopes satisfy the following {\em superadditivity property}:
$$\Delta_v(X,L_1)+\Delta_v(X,L_2)\subset \Delta_v(X,L_1\otimes L_2) $$
(see \cite[Proposition 2.32]{KaKh} for more details).
In general, it is not true that
$$\Delta_v(X,L_1)+\Delta_v(X,L_2)=\Delta_v(X,L_1\otimes L_2).$$
For instance, if $X=R_{121}$ is the Bott-Samelson variety from Example \ref{e.BS}, and $v$ is either
a valuation considered in \cite{HY} or the one in \cite{Fu}, then
$\Delta_v(R_{121},L_1)+\Delta_v(R_{121},L_2)\ne\Delta_v(X,L_1\otimes L_2)$
for $L_1$ and $L_2$ that come from $GL_2/B_2$ and $GL_3/B_3$
(see \cite[Examples 4.1-4.3]{HY} and \cite[Example B1]{Fu} for more details).
Some general results on representation of Newton--Okounkov poytopes as Minkowski sums can be found in
\cite{SchS}.
\subsection{FFLV polytopes}
Recall that a dominant weight of $GL_n$ is  a non-decreasing collection of integers
$\L:=(\l_1,\ldots,\l_n)\in\Z^n$, that is, $\l_1\ge\l_2\ge\ldots\ge \l_n$.
For every dominant $\L$, we now define the FFLV polytope $FFLV(\L)$.
Put $d:=\frac{n(n-1)}2$
Label coordinates in $\R^d$ by
$(u^1_{n-1};u^2_{n-2},u^1_{n-2};\ldots;u^{n-1}_1,u^{n-2}_{1},\ldots,u^{1}_1)$.
Arrange the coordinates into the table
$$
\begin{array}{cccccccccc}
\l_1&       & \l_2    &         &\l_3          &    &\ldots    & &       &\l_n   \\
    &u^1_1&         &u^1_2  &         & \ldots   &       &  &u^1_{n-1}&       \\
    &       &u^2_1 &       &  \ldots &   &        &u^2_{n-2}&         &       \\
    &       &       &  \ddots   & &  \ddots   &      &         &         &       \\
    &       &       &  &u^{n-2}_1&     &  u^{n-2}_2 &        &         &       \\
    &       &         &    &     &u^{n-1}_1&   &              &         &       \\
\end{array}
\eqno{(FFLV)}$$
The polytope $FFLV(\L)$ is defined by inequalities
$u^l_m\ge 0$ and
$$\sum_{(l,m)\in D}u^l_m\le \l_i-\l_j$$
for all Dyck paths going from $\l_i$ to $\l_j$ in table $(FFLV)$  where $1\le i<j\le n$
(see \cite{FFL} for more details).

In what follows, we consider simultaneously several FFLV polytopes for different $n$.
To do this we use a flag of subspaces
$$\R^1\subset\R^3\subset\ldots\subset\R^{d_i}\subset\ldots\subset\R^d,$$
where $d_i:=\frac{(n-i+1)(n-i)}{2}$, and $\R^{d_i}$ is the coordinate subspace given by
vanishing of the first $(n-1)+\ldots+(n-i+1)$ coordinates, that is, by equations
$u^1_1=\ldots=u^{n-i+1}_{i-1}=0$ if $i>1$.
If $i=1$, then $d_1=d$ and $\R^{d_1}$ is the whole space $R^d$.
We assume that $FFLV(\L_i)$ for a dominant weight $\L_i$ of $GL_{n-i+1}$ lives in the subspace
$\R^{d_i}$.

\section{Main result}\label{s.main}
We now describe Newton--Okounkov polytopes of semiample line bundles on the Bott--Samelson variety
$X_{\w0}$ for the lowest term valuation $v_0$ (see Definition \ref{d.val}).
By Lemma \ref{l.Picard}, any semiample line bundle $L$ on $X_{\w0}$ is the tensor product
$L(\L_1,\ldots,\L_{n-1}):=p_1^*L_{\L_1}\otimes\ldots\otimes p^*_{n-1}L_{\L_{n-1}}$ where
$L_{\L_i}$ is the semiample line bundle on the flag variety $GL_{n-i+1}/B_i$ corresponding to a
dominant weight $\L_i$.

\begin{thm}
The Newton--Okounkov polytope $\Delta_{v_0}(X_{\w0},L(\L_1,\ldots,\L_{n-1}))$ is equal to the
Minkowski sum of FFLV polytopes
$$FFLV(\L_1)+FFLV(\L_2)+\ldots+FFLV(\L_{n-1}).$$
\end{thm}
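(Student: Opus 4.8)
The plan is to combine three ingredients: the explicit computation of $\Delta_{v_0}(X_{\w0}, L_{\L_1})$ for line bundles pulled back from the top flag variety $GL_n/B_n$ done in \cite[Theorem 2.1]{K15}, the behavior of the valuation $v_0$ under the tower structure of $R_{I(n)}$, and the additivity of divided difference operators on polytopes from \cite{Ki16II}. First I would recall that the Bott--Samelson variety $X_{\w0}$ is built up from $X_{\overline{w_0^{(n-1)}}}$ for $GL_{n-1}$ by adjoining the first block of reflections $(s_1)(s_2s_1)\cdots$; correspondingly the valuation $v_0$ on $X_{\w0}$ restricts compatibly, via the projection $p_2 : X_{\w0} \to GL_{n-1}/B_{n-1}$ (and the iterated projections $p_i$), to the valuations $v_0$ on the smaller Bott--Samelson varieties. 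This should give that the pullback line bundles $p_i^*L_{\L_i}$ have Newton--Okounkov polytopes $\Delta_{v_0}(X_{\w0}, p_i^*L_{\L_i})$ sitting inside the coordinate subspace $\R^{d_i}$, and that each such polytope equals $FFLV(\L_i)$ by applying \cite[Theorem 2.1]{K15} to the $GL_{n-i+1}$ Bott--Samelson variety and identifying it with the relevant face/slice of the $GL_n$ picture.

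Next I would realize the full polytope $\Delta_{v_0}(X_{\w0}, L(\L_1,\ldots,\L_{n-1}))$ by the geometric-to-combinatorial dictionary that was used in \cite{K15}: the polytope of $L_{\L}$ on $X_{\w0}$ is obtained from the point (or segment) corresponding to the dominant weight by successively applying the divided difference (Demazure-type) operators $D_{i}$ on polytopes indexed by the letters of the word $\w0$, read in the order dictated by the tower. The key structural point is that the operators for the later blocks (those coming from $GL_{n-i+1}$ with $i \geq 2$) act only within the subspace $\R^{d_i}$ and commute appropriately with the "convex-geometric Demazure character" that records $L(\L_1,\ldots,\L_{n-1})$; so the whole polytope is the image under the composite string of operators applied to the Minkowski sum of the starting cells for the individual weights. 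By the additivity of divided difference operators with respect to the Minkowski sum established in \cite{Ki16II}, this composite, applied to a Minkowski sum, distributes over the summands, yielding
$$\Delta_{v_0}(X_{\w0}, L(\L_1,\ldots,\L_{n-1})) = \sum_{i=1}^{n-1} \Delta_{v_0}(X_{\w0}, p_i^*L_{\L_i}) = \sum_{i=1}^{n-1} FFLV(\L_i),$$
which is exactly Theorem~\ref{t.main} combined with the identification of each summand as an FFLV polytope.

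To make the argument rigorous rather than merely formal, I would use the volume criterion \cite[Corollary 3.2]{KaKh} quoted in Section~\ref{ss.NO}: it suffices to check that the first approximation $\Delta^1_{v_0}$ already has the right normalized volume, i.e. that $d!\,\Volume$ of the candidate Minkowski sum equals the degree of $X_{\w0}$ in the embedding by $L(\L_1,\ldots,\L_{n-1})$. This degree is a sum of mixed multiplicities / intersection numbers of the $p_i^*L_{\L_i}$, and the mixed volumes of the $FFLV(\L_i)$ reproduce these because the $FFLV(\L_i)$ live in complementary-dimensional coordinate subspaces $\R^{d_i}$, so all mixed terms vanish except the "pure" ones, each of which is computed by \cite[Theorem 2.1]{K15}. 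Thus the volume bookkeeping reduces to the single-summand case already known.

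The main obstacle, as I see it, is the second step: showing that the divided difference operators attached to the blocks genuinely respect the subspace filtration $\R^{d_1}\supset\R^{d_2}\supset\cdots$ and that applying the full composite operator of $\w0$ to the starting segment of $L(\L_1,\ldots,\L_{n-1})$ really decomposes as the sum of the composites applied to the individual starting segments. This requires knowing precisely how $v_0$ on $X_{\w0}$ interacts with the $\P^1$-bundle structure at each stage (that the new coordinate introduced by each $s_j$ is the one along which the operator $D_j$ acts) and that the support function / exponents of the section ring behave coherently under the projections $p_i$; the superadditivity inclusion $\sum FFLV(\L_i) \subseteq \Delta_{v_0}(X_{\w0}, L)$ is automatic from \cite[Proposition 2.32]{KaKh}, so the real content is the reverse inclusion, which is where the $\cite{Ki16II}$ additivity and the volume count do the work.
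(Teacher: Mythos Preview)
Your overall strategy --- superadditivity for one inclusion, then the volume criterion \cite[Corollary 3.2]{KaKh} for the reverse --- is exactly the one the paper uses. The difference lies in how the volume is computed, and this is where your argument has a genuine gap.

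In your third paragraph you claim that the $FFLV(\L_i)$ live in ``complementary-dimensional coordinate subspaces $\R^{d_i}$, so all mixed terms vanish except the pure ones.'' This is false: the subspaces $\R^{d_1}\supset\R^{d_2}\supset\cdots\supset\R^{d_{n-1}}$ are \emph{nested}, not complementary (indeed $d_1=d$ is already the full dimension). Consequently the mixed volumes of the $FFLV(\L_i)$ are very much nonzero, and on the algebro-geometric side the mixed intersection numbers of the $p_i^*L_{\L_i}$ are nonzero as well (only the ``pure'' powers $(p_i^*L_{\L_i})^d$ with $i\ge 2$ vanish, since they are pulled back from lower-dimensional varieties). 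So the volume bookkeeping does \emph{not} reduce to the single-summand case of \cite[Theorem 2.1]{K15}; computing these mixed contributions is precisely the heart of the theorem.

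The paper closes this gap by passing through Gelfand--Zetlin polytopes rather than working with FFLV polytopes directly. The divided difference operators of \cite{Ki16II} build $GZ(\L_i)$ (not $FFLV(\L_i)$), and their additivity yields
\[
GZ(\L_1)+\cdots+GZ(\L_{n-1})=D_1\PT_{\L_{n-1}}(D_2D_1)\PT_{\L_{n-2}}\cdots(D_{n-1}\cdots D_1)(a_{\L_1}),
\]
whose character, by \cite[Theorem 3.2]{Ki16I} and the generalized Demazure theorem \cite[Theorem 5]{LLM}, matches $H^0(X_{\w0},L(\L_1,\ldots,\L_{n-1}))^*$. Hence the Ehrhart polynomial of the GZ Minkowski sum equals the Hilbert polynomial of $X_{\w0}$, giving the correct degree. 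A separate comparison (Lemma \ref{l.comparison}) then shows the FFLV and GZ Minkowski sums have equal volume. Your second paragraph gestures at the operator identity, but applies it as if it produced the Newton--Okounkov polytope (equivalently the FFLV sum) directly; to make that rigorous you would still need either the GZ detour or an independent argument linking the operator output to the FFLV sum and to the degree.
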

It would be interesting to describe explicitly the inequalities that define the Minkowski sum
of FFLV polytopes in the theorem, e.g., by using Dyck paths that do not necessarily
start at the first row.

\begin{example}{cf. \cite[Section 6.4]{An}}\label{e.An}
Let $n=3$.
If $\L_1=(\l^1_1,\l^1_2,\l^1_3)$ and $\L_2=0$, then the 3-dimensional polytope
$\Delta_v(X_{\w0},L(\L_1,\L_2))$
is given by 6 inequalities
$$0\le u^1_1\le\l_1^1-\l_2^1; \quad 0\le u^1_2\le\l_2^1-\l_3^1; \quad 0\le u^2_1; \quad
u^1_1+u^2_1+u^1_2\le\l_1^1-\l_3^1.$$
If $\L_1=0$ and $\L_2=(\l^2_1,\l^2_2)$, then $\Delta_v(X_{\w0},D(\L_1,\L_2))$ is the
segment
$$0\le u^1_2\le \l^2_2-\l^2_1; \quad u^1_1=u^2_1=0.$$
In general, the polytope $\Delta_v(X_{\w0},L(\L_1,\L_2))$
is the Minkowski sum of the polytope $FFLV(\L_1)$ and the segment $FFLV(\L_2)$.
It is given by 7 non-redundant inequalities if $\L_1$ and $\L_2$ are strictly dominant.

For $\L_1=(1,0,-1)$, $\L_2=(0,0)$ and for $\L_1=(1,0,-1)$, $\L_2=(1,0)$
we get the polytopes depicted on \cite[Figure 3(a)]{An} and \cite[Figure 3(b)]{An}, respectively.
\end{example}

\subsection{Proof of Theorem \ref{t.main}}
\label{s.proof}
The main ingredient of the proof is the construction of Gelfand--Zetlin (GZ) polytopes using
convex geometric divided difference operators $D_1$,\ldots, $D_{n-1}$ (see \cite[Theorem 3.4]{Ki16II}
for the details).
These operators act on convex polytopes in $\R^d$ by mimicking Demazure operators.
By definition they satisfy the
following additivity property with respect to Minkowski sum of polytopes:
$$D_i(P+Q)=D_i(P)+D_i(Q)$$
for any pair of polytopes $P$ and $Q$ for which $D_i(P)$ and $D_i(Q)$ are well-defined
(results of \cite{Ki16II} are formulated in the more general context of convex chains but here
we will restrict ourselves to the case of polytopes).
In what follows, we use notation and definitions of \cite{Ki16II}.

By the superadditivity property of Newton--Okounkov convex bodies we have the inclusion
$$\Delta_{v_0}(GL_n/B_n,L_{\L_{1}})+\ldots+\Delta_{v_0}(GL_2/B_2,L_{\L_{n-1}})
\subset\Delta_{v_0}(X_{\w0},L(\L_1,\ldots,\L_{n-1})).$$
Since $\Delta_{v_0}(GL_{n-i+1}, L_{\L_i})=FFLV(\L_i)$ by \cite[Theorem 2.1]{K15} this
implies the inclusion
$$FFLV(\L_1)+FFLV(\L_2)+\ldots+FFLV(\L_{n-1})\subset\Delta_{v_0}(X_{\w0},L(\L_1,\ldots,\L_{n-1})).$$
By \cite[Corollary 3.2]{KaKh}, 
the above inclusion of polytopes is equality if and only if the
volume of the left hand side times $d!$ is equal to the degree of $X_{\w0}$ in the embedding given
by the line bundle $L(\L_1,\ldots,\L_{n-1})$.

To complete the proof we compare the degree and the volume in two steps.
First, we identify the degree of $X_{\w0}$ with the normalized volume of
the Minkowski sum of GZ polytopes using generalized Demazure theorem.
Second, we compare volumes of Minkowski sums of FFLV and GZ polytopes.
These steps are conducted in Lemmas \ref{l.degree} and \ref{l.comparison}, respectively.
\begin{lemma}\label{l.degree}
The volume of the Minkowski sum $GZ(\L_1)+GZ(\L_2)+\ldots+GZ(\L_{n-1})$
of GZ polytopes is equal to $d!$ times the degree of the Bott--Samelson variety $X$
in the embedding given by the line bundle $L(\L_1,\ldots,\L_{n-1})$ (assuming that the latter
is very ample).
\end{lemma}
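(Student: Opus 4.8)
The plan is to compute the degree of $X_{\w0}$ in the embedding given by $L(\L_1,\ldots,\L_{n-1})$ as a mixed intersection number and then reinterpret it as a normalized mixed volume of GZ polytopes. Concretely, by multilinearity of the degree,
$$
\deg_{L(\L_1,\ldots,\L_{n-1})} X_{\w0} = \sum_{\substack{a_1+\cdots+a_{n-1}=d}} \binom{d}{a_1,\ldots,a_{n-1}} \bigl(p_1^*L_{\L_1}\bigr)^{a_1}\cdots\bigl(p_{n-1}^*L_{\L_{n-1}}\bigr)^{a_{n-1}},
$$
so it suffices to match each mixed term with the corresponding mixed volume of $GZ(\L_1),\ldots,GZ(\L_{n-1})$. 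I would first recall the generalized Demazure theorem from \cite{Ki16II}: the Newton--Okounkov polytope of $p_i^*L_{\L_i}$ with respect to $v_0$ is obtained by applying the chain of convex-geometric divided difference operators $D_1,\ldots,D_{n-1}$ (in the order dictated by the word $\w0$) to the point corresponding to the weight $\L_i$, and this yields exactly $GZ(\L_i)$ sitting in the appropriate coordinate subspace. The key structural input is that these operators are \emph{additive} with respect to Minkowski sum, so applying the same chain of operators to the Minkowski sum of the weight-points produces $GZ(\L_1)+\cdots+GZ(\L_{n-1})$.

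Next I would invoke the main result of \cite{Ki16II} (the convex-geometric Demazure character/degree formula) which states that the normalized volume of the polytope produced by this operator chain computes the degree of the corresponding line bundle on the Bott--Samelson variety. More precisely, since the divided difference operators on polytopes were constructed precisely to mirror the action of Demazure operators on cohomology (pushforward--pullback along the $\P^1$-fibrations), the top self-intersection of $L(\L_1,\ldots,\L_{n-1})$ on $X_{\w0}$ equals the lattice-normalized volume of the polytope $D_{n-1}\cdots D_1$ applied to the sum of weight-points, which by additivity is $GZ(\L_1)+\cdots+GZ(\L_{n-1})$. This gives exactly
$$
\Volume\bigl(GZ(\L_1)+\cdots+GZ(\L_{n-1})\bigr) = \frac{1}{d!}\deg_{L(\L_1,\ldots,\L_{n-1})} X_{\w0},
$$
which is the claimed identity after clearing $d!$.

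For the bookkeeping I would pay attention to two points. First, one must check that the coordinate subspaces $\R^{d_i}$ introduced for the FFLV polytopes are the same ones in which the operator chain naturally lands $GZ(\L_i)$; this is a matter of tracking which operators act trivially (the initial subword structure of $I(n)$ means $p_i$ factors through $R_{I(n-i+1)}$, so only the operators indexed by the tail of the word are relevant for $\L_i$). Second, one must verify that $L(\L_1,\ldots,\L_{n-1})$ being very ample is harmless: the degree-volume identity from \cite{Ki16II} holds for the line bundle built from the operator chain regardless, and very ampleness is only needed so that \cite[Corollary 3.2]{KaKh} applies in the surrounding proof of Theorem \ref{t.main} — so the lemma can be stated under that hypothesis without extra work. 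The main obstacle I anticipate is purely expository rather than mathematical: correctly citing and applying the generalized Demazure theorem of \cite{Ki16II} in the multi-weight (mixed) setting, i.e.\ making sure the operator chain is applied once to the Minkowski sum of all the weight-points rather than separately, and that the resulting normalized volume indeed equals the full top intersection number and not merely one mixed term. Once the additivity of the $D_i$ and the degree formula are in hand, the proof is a one-line combination; the delicate part is setting up the correspondence between the word $\w0$, the projections $p_i$, and the operator chain so that everything lands in the right subspace.
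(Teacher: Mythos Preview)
Your proposal has the right overall shape (use additivity of the $D_i$ and a Demazure-type formula), but there are two genuine gaps. First, a conceptual slip: the Newton--Okounkov polytope of $p_i^*L_{\L_i}$ for the valuation $v_0$ is \emph{not} $GZ(\L_i)$ --- it is $FFLV(\L_i)$ (that is the content of \cite[Theorem~2.1]{K15}). The GZ polytope enters this lemma not as a Newton--Okounkov body but purely as the output of the convex-geometric operator chain from \cite{Ki16II}; the whole point of Lemma~\ref{l.comparison} is that GZ and FFLV are different polytopes with the same volume. So the sentence ``the Newton--Okounkov polytope \ldots\ yields exactly $GZ(\L_i)$'' is simply false and your argument cannot route through Newton--Okounkov bodies at all.

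Second, and more seriously, the bridge from the polytope to the degree of $X_{\w0}$ is missing. You appeal to a ``degree formula'' in \cite{Ki16II}, but what is actually available there (and in \cite{Ki16I}) is a \emph{character} identity: the lattice points of the polytope built by a string of $D_i$'s compute a Demazure-type character. For the multi-weight line bundle $L(\L_1,\ldots,\L_{n-1})$ on a Bott--Samelson variety, the relevant character of $H^0$ is given by the \emph{generalized} Demazure formula of \cite{LLM}, namely $T_1e^{\L_{n-1}}(T_2T_1)e^{\L_{n-2}}\cdots e^{\L_2}(T_{n-1}\cdots T_1)e^{\L_1}$, with the weights interspersed between the blocks of Demazure operators. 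To match this on the polytope side, the paper does \emph{not} apply the full chain to $a_{\L_1}+\cdots+a_{\L_{n-1}}$ as you suggest; instead it rewrites $GZ(\L_1)+\cdots+GZ(\L_{n-1})$ as $D_1\PT_{\L_{n-1}}(D_2D_1)\PT_{\L_{n-2}}\cdots\PT_{\L_2}(D_{n-1}\cdots D_1)(a_{\L_1})$, where $\PT$ denotes parallel transport, and then invokes \cite[Theorem~3.2]{Ki16I} plus \cite[Theorem~5]{LLM} to conclude that the Ehrhart polynomial equals the Hilbert polynomial (hence normalized volume equals degree). Your version --- one chain applied to a single summed weight-point --- does not have this interspersed form, so even granting that the extra operators act trivially on the smaller $a_{\L_i}$ (which you do not verify), you still have no cited result linking the resulting polytope's volume to $\deg_{L(\L_1,\ldots,\L_{n-1})}X_{\w0}$. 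The missing idea is precisely the parallel-transport rewriting together with the generalized Demazure theorem of \cite{LLM}.
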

\begin{proof} For every dominant weight $\L_i=(\l_1^i,\ldots,\l_{n-i+1}^i)$ of $GL_{n-i+1}$
define the point $a_{\L_i}=(0,\ldots,0, \l_2^i,\ldots,\l_{n-i+1}^i; \l_3^i,\ldots,\l_{n-i+1}^i;
\ldots; \l_{n-i+1}^i)\in\R^{d_i}\subset\R^d$.
The additivity property of $D_i$ implies immediately the following
identity:
$$D_1(D_2D_1)(\ldots)(D_{n-1}\ldots D_1)(a_{\L_1})+D_1(D_2D_1)(\ldots)(D_{n-2}\ldots D_1)(a_{\L_2})+$$
$$\ldots+D_1(D_2D_1)(a_{\L_{n-2}})+D_1(a_{\L_{n-1}})=$$
$$=D_1\PT_{\L_{n-1}}(D_2D_1)\PT_{\L_{n-2}}(\ldots)\PT_{\L_2}(D_{n-1}\ldots D_1)(a_{\L_1}),$$
where $\PT_v$ is the operator of the parallel transport by a vector $v$, in particular, $\PT_v(P)=v+P$
for any polytope $P\subset\R^d$.

Combining the above identity with \cite[Theorem 3.4]{Ki16I} we can interpret the Minkowski sum of
GZ polytopes as follows:
$$GZ(\L_1)+GZ(\L_2)+\ldots+GZ(\L_{n-1})=
D_1\PT_{\L_{n-1}}(D_2D_1)\PT_{\L_{n-2}}(\ldots)\PT_{\L_2}(D_{n-1}\ldots D_1)(a_{\L_1}).$$

Hence, by \cite[Theorem 3.2]{Ki16I} the character of $GZ(\L_1)+GZ(\L_2)+\ldots+GZ(\L_{n-1})$
coincides with
$$T_1e^{\L_{n-1}}(T_2T_1)e^{\L_{n-2}}(\ldots)e^{\L_2}(T_{n-1}\ldots T_1)e^{\L_1},$$
where $T_i$ denotes the Demazure operator corresponding to the $i$-th simple root of $GL_n$.
By generalized Demazure theorem \cite[Theorem 5]{LLM} this is exactly the Demazure character of the
$B$-module $H^0(X_{\w0}, L(\L_1,\ldots,\L_{n-1}))^*$.
In particular, the Ehrhart polynomial of $GZ(\L_1)+GZ(\L_2)+\ldots+GZ(\L_{n-1})$
coincides with the Hilbert polynomial of $X_{\w0}$ in the embedding
given by the line bundle $L(\L_1,\ldots,\L_{n-1})$.
This implies the statement of the lemma.
\end{proof}

\begin{lemma}\label{l.comparison}
The Minkowski sums $GZ(\L_1)+GZ(\L_2)+\ldots+GZ(\L_{n-1})$ and
$FFLV(\L_1)+FFLV(\L_2)+\ldots+FFLV(\L_{n-1})$ have the same volume.
\end{lemma}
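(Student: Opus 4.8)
The plan is to show that the two Minkowski sums have the same volume by proving that each individual FFLV polytope and the corresponding GZ polytope have the same volume, and more strongly that all their mixed volumes agree. Recall that the volume of a Minkowski sum $\sum_i \lambda_i P_i$ is the polynomial $\sum \binom{d}{k_1,\ldots,k_{n-1}} V(P_1^{k_1},\ldots,P_{n-1}^{k_{n-1}})\lambda_1^{k_1}\cdots\lambda_{n-1}^{k_{n-1}}$ in the $\lambda_i$, so it suffices to match all mixed volumes $V(FFLV(\L_1)^{k_1},\ldots)$ with $V(GZ(\L_1)^{k_1},\ldots)$. The cleanest route is through lattice point counting: the Ehrhart polynomial of a Minkowski sum of lattice polytopes, evaluated along the dilation direction $t(P_1+\cdots+P_{n-1})$, has leading coefficient equal to the volume of the sum, so it is enough to show that $FFLV(\L_1)+\cdots+FFLV(\L_{n-1})$ and $GZ(\L_1)+\cdots+GZ(\L_{n-1})$ contain the same number of lattice points (for all choices of dominant $\L_i$, which then forces equality of Ehrhart polynomials and in particular of leading terms).

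The key input is the classical fact that the FFLV polytope $FFLV(\L)$ and the Gelfand--Zetlin polytope $GZ(\L)$ have the same number of lattice points for every dominant weight $\L$ of $GL_m$ --- namely $\dim V_\L$ --- since both are Ehrhart-equivalent models for the irreducible representation; this is the main theorem of Feigin--Fourier--Littelmann. First I would upgrade this to Minkowski sums. By Lemma \ref{l.degree} the character of $GZ(\L_1)+\cdots+GZ(\L_{n-1})$ is the Demazure character $T_1e^{\L_{n-1}}(T_2T_1)\cdots(T_{n-1}\cdots T_1)e^{\L_1}$, hence the number of lattice points of this Minkowski sum is the dimension of $H^0(X_{\w0},L(\L_1,\ldots,\L_{n-1}))$. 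So I need the parallel statement that the number of lattice points of $FFLV(\L_1)+\cdots+FFLV(\L_{n-1})$ is also $\dim H^0(X_{\w0},L(\L_1,\ldots,\L_{n-1}))$. This is exactly where the superadditivity inclusion already established in the proof of Theorem \ref{t.main} cuts the other way: we have $FFLV(\L_1)+\cdots+FFLV(\L_{n-1})\subset\Delta_{v_0}(X_{\w0},L(\L_1,\ldots,\L_{n-1}))$, so the number of lattice points in the left side is at most $\dim H^0(X_{\w0},L(\L_1,\ldots,\L_{n-1}))$; for the reverse inequality I would exhibit enough lattice points, which is where the structure of the $v_0$-valuation and the explicit geometric coordinates of \cite[Section 2.2]{K15} enter.

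More concretely, I expect the cleanest argument to bypass lattice point counting in favor of a direct volume computation: induct on $n$ using the divided difference operators $D_i$ and their additivity. Write $FFLV(\L_1)+\cdots+FFLV(\L_{n-1}) = D_1\PT_{\L_{n-1}}(D_2D_1)\cdots\PT_{\L_2}(D_{n-1}\cdots D_1)(a_{\L_1})$ --- i.e. establish the FFLV analogue of the identity in Lemma \ref{l.degree} --- using that each $FFLV(\L_i)$ arises from applying $D_1(D_2D_1)\cdots(D_{n-i}\cdots D_1)$ to the point $a_{\L_i}$, just as $GZ(\L_i)$ does. The point is that the FFLV polytope of $GL_m$ and the GZ polytope of $GL_m$ both appear as the image of $a_\L$ under the same string of operators $D_1(D_2D_1)\cdots(D_{m-1}\cdots D_1)$, only with a different choice of "direction" conventions inside the $D_i$ (Minkowski-summing on the correct facet). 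Since the $D_i$ preserve volume on each fiber (they replace a segment by a segment of controlled length, and integrating over the base is the same for both models), a fiberwise comparison of lengths --- which is exactly the content of the single-weight equality $\Volume FFLV(\L)=\Volume GZ(\L)$ pushed through the operator recursion --- yields equality of volumes for the Minkowski sums.

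The main obstacle is the last point: $D_i$ is not literally volume-preserving in a naive sense, and FFLV and GZ polytopes are genuinely different polytopes (not related by a volume-preserving affine map, already visible for $GL_3$ where one is the Minkowski sum description and the other a pyramid over a square), so I cannot just transport one to the other. What is true is that $D_i$ acts fiberwise over a projection $\pi_i$, replacing each fiber segment by a segment whose length is a piecewise-linear function of the base point, and the two constructions produce the \emph{same} such length functions at each stage --- this is the real combinatorial heart, and verifying it for the full $n$-fold iterated construction (rather than just $GL_3$) is the technical crux. I would organize this as: (i) recall the single-weight volume identity $\Volume FFLV(\L) = \Volume GZ(\L)$; (ii) show $FFLV(\L)$ is obtained from the string of $D_i$'s applied to $a_\L$, matching the GZ recursion of \cite[Theorem 3.4]{Ki16II} fiber-length for fiber-length (this is the hard step); (iii) invoke additivity of $D_i$ to rewrite both Minkowski sums as a single iterated $D_i$-$\PT$ expression applied to $a_{\L_1}$; (iv) conclude equality of volumes since the two expressions agree term by term in the recursion. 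Alternatively, if step (ii) proves too delicate, fall back on the Ehrhart argument above, whose only missing ingredient is the lower bound on lattice points of $FFLV(\L_1)+\cdots+FFLV(\L_{n-1})$ inside $\Delta_{v_0}(X_{\w0},L(\L_1,\ldots,\L_{n-1}))$.
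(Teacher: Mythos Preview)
Your proposal is a plan with self-acknowledged gaps, and the gaps are more serious than you seem to realize.

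In the lattice-point approach, the inequality you invoke goes the wrong way. From the inclusion $FFLV(\L_1)+\cdots+FFLV(\L_{n-1})\subset\Delta_{v_0}(X_{\w0},L)$ you want to deduce that the left side has \emph{at most} $\dim H^0(X_{\w0},L)$ lattice points. But a Newton--Okounkov body has no reason to contain at most $\dim H^0$ lattice points; what general theory gives (since $v_0$ has one-dimensional leaves) is that $\Delta_{v_0}$ contains \emph{at least} $\dim H^0$ lattice points, namely the valuation images $v_0(s)$. So the inclusion yields nothing in the direction you need. Worse, the route is circular with Theorem~\ref{t.main}: Lemma~\ref{l.comparison} is exactly the missing ingredient used there to identify $\Delta_{v_0}$ with the FFLV sum, so you cannot appeal to properties of $\Delta_{v_0}$ (beyond its volume) to prove the lemma.

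In the $D_i$-operator approach, your displayed identity
$FFLV(\L_1)+\cdots+FFLV(\L_{n-1}) = D_1\PT_{\L_{n-1}}(D_2D_1)\cdots(D_{n-1}\cdots D_1)(a_{\L_1})$
is literally false: by the computation in Lemma~\ref{l.degree} the right-hand side equals $GZ(\L_1)+\cdots+GZ(\L_{n-1})$, and these two polytopes are not equal already for $n=3$. Your hedge about ``different direction conventions inside the $D_i$'' points toward the right idea---there are FFLV-type analogues of the operators---but you do not make this precise, and the step you yourself label ``the technical crux'' (matching fibre-length functions stage by stage) is exactly what is left undone.

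The paper's argument avoids the Newton--Okounkov body entirely: it extends the recursive (fibre-over-a-segment) construction of \cite[Section~4]{K15} and the FFLV/GZ comparison of \cite[Section~4]{Ki16I} to the Minkowski sums, obtaining equal Ehrhart polynomials directly. That recursion is what you should be fleshing out, rather than routing through $\Delta_{v_0}$.
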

\begin{proof}
It is not hard to extend arguments of \cite[Section 4]{Ki16I} on comparison of FFLV and
GZ polytopes so
that they also work for Minkowski sums of these polytopes.
Namely, one can show that the polytopes 
$GZ(\L_1,\ldots,\L_{n-1}):=GZ(\L_1)+\ldots+GZ(\L_{n-1})$ and
$FFLV(\L_1,\ldots,\L_{n-1}):=FFLV(\L_1)+\ldots+FFLV(\L_{n-1})$ have the same Ehrhart 
polynomial by constructing both of them recursively as in \cite[Section 4]{K15}. 
\end{proof}

\end{document}